\documentclass[11pt]{amsart}
\usepackage{amsfonts}
\usepackage[a4paper,margin=1in]{geometry}
\usepackage[T1]{fontenc}
\usepackage[utf8]{inputenc}
\usepackage{lmodern}
\usepackage{amsmath,amssymb,amsthm,mathtools}
\usepackage{enumitem}
\usepackage{microtype}
\usepackage{hyperref}

\hypersetup{
  colorlinks=true,
  linkcolor=blue,
  citecolor=blue,
  urlcolor=blue,
  pdftitle={An Oskolkov-Zhizhiashvili Criterion for Rectangular Fourier Sums},
  pdfauthor={Ushangi Goginava}
}
\numberwithin{equation}{section}
\theoremstyle{plain}
\newtheorem{theorem}{Theorem}[section]
\newtheorem{proposition}[theorem]{Proposition}
\newtheorem{lemma}[theorem]{Lemma}
\newtheorem{corollary}[theorem]{Corollary}

\newtheorem{problem}[theorem]{Problem}
\newtheorem*{zh}{Theorem Zh}
\theoremstyle{remark}
\newtheorem{remark}[theorem]{Remark}

\begin{document}
\title[Oskolkov--Zhizhiashvili criterion]{An Oskolkov--Zhizhiashvili
Criterion for Rectangular Fourier Sums}
\author{Ushangi Goginava}
\address{Department of Mathematical Sciences\\
United Arab Emirates University, P.O. Box No. 15551\\
Al Ain, Abu Dhabi, United Arab Emirates}
\email{zazagoginava@gmail.com; ugoginava@uaeu.ac.ae}
\subjclass[2020]{42B05, 42B08, 42B25, 41A25}
\keywords{Multiple Fourier series, rectangular partial sums, Pringsheim
convergence, almost everywhere convergence, modulus of continuity.}

\begin{abstract}
Let $S_{\mathbf n}f$ denote the symmetric rectangular partial sums of the
trigonometric Fourier series of a function on the $d$-dimensional torus.
We prove a summable endpoint criterion at the Zhizhiashvili critical scale
for all $d\ge2$ and $1\le p\le2$. The criterion allows a general summable
secondary weight at the iterated-logarithmic level and contains, as special
cases, a double-logarithmic endpoint criterion and an $L^p$ Oskolkov-type
corollary. In particular, it answers the Zhizhiashvili--Marcinkiewicz
problem for $1<p<2$ and sharpens Zhizhiashvili's classical sufficient
conditions in the endpoint cases $p=1$ and $p=2$. 
\end{abstract}

\maketitle

\section{Introduction}

The one-dimensional theory of pointwise convergence of Fourier series is
shaped by the Carleson--Hunt theorem: if $f\in L^p(\mathbb{T})$ and $%
1<p<\infty$, then the Fourier series of $f$ converges almost everywhere to $%
f $ \cite{Carleson1966,Hunt1968}. In several variables the situation is
essentially different, because there are many inequivalent ways to let the
set of frequencies tend to infinity. In this paper we study symmetric
rectangular partial sums 
\begin{equation*}
S_{\mathbf{n}}f(x)=\sum_{|k_1|\le n_1}\cdots\sum_{|k_d|\le n_d}\widehat f(%
\mathbf{k})e^{i\mathbf{k}\cdot x}, \qquad \mathbf{n}=(n_1,\ldots,n_d),
\end{equation*}
and convergence in the sense of Pringsheim, that is, 
\begin{equation*}
S_{\mathbf{n}}f(x)\to f(x) \qquad \text{as}\qquad \min_{1\le j\le
d}n_j\to\infty.
\end{equation*}

A fundamental warning was given by Fefferman \cite{Fefferman1971Divergence}. Unlike the one-dimensional
Carleson--Hunt theorem, continuity alone does not force Pringsheim
convergence of rectangular partial sums in dimension two. More precisely,
Fefferman constructed continuous functions of two variables whose double
Fourier series diverge in the rectangular sense; this result made clear that
multidimensional rectangular convergence requires additional quantitative
smoothness assumptions \cite{Fefferman1971Divergence,Dyachenko2002}. We
shall use the $L^p$ modulus of continuity 
\begin{equation*}
\omega(\delta,f)_p=\sup_{\|h\|_{\mathbb{T}^d}\le\delta}\left\|f(\cdot+h)-f(%
\cdot)\right\|_{L^p(\mathbb{T}^d)}
\end{equation*}
as the measure of that smoothness.

The systematic study of almost everywhere convergence of multiple Fourier
series under logarithmic moduli of continuity was initiated by
Zhizhiashvili \cite [Ch. 2]{Zhizhiashvili1996}. The following theorem is the starting point of the present
paper.

\begin{zh}[Zhizhiashvili]
\label{thm:Zhizhiashvili-intro} Let $d\ge2$. The following assertions hold.
\begin{enumerate}[label=\textup{(\roman*)},leftmargin=2.2em]
\item If $f\in L^1(\mathbb{T}^d)$ and, for some $\varepsilon>0$,
\begin{equation}
\omega(\delta,f)_1\le \frac{A(f)}{(\log(1/\delta))^{d+\varepsilon}}, \qquad
0<\delta\le \frac12,  \label{eq:Zh-p1}
\end{equation}
then the rectangular Fourier series of $f$ converges almost everywhere in
the sense of Pringsheim.

\item If $f\in L^2(\mathbb{T}^d)$ and, for some $\varepsilon>0$,
\begin{equation}
\omega(\delta,f)_2\le \frac{A(f)}{(\log(1/\delta))^{d/2+\varepsilon}},
\qquad 0<\delta\le \frac12,  \label{eq:Zh-p2}
\end{equation}
then the rectangular Fourier series of $f$ converges almost everywhere in
the sense of Pringsheim.
\end{enumerate}
\end{zh}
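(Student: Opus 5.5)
Both parts follow one scheme: a dyadic telescoping decomposition of $f$ into smooth blocks, a maximal estimate for the rectangular partial sums of each block, and summation of these estimates using the modulus-of-continuity hypothesis. Let $V_m$ be a de la Vallée Poussin type operator of order $2^m$ in every variable. Write
\[
f=V_0f+\sum_{m\ge1}\Delta_m f,\qquad \Delta_m f=V_mf-V_{m-1}f,
\]
so that $\Delta_m f$ is a trigonometric polynomial of degree at most $2^{m+1}$ in each variable. Jackson's inequality gives
\[
\|\Delta_m f\|_p\lesssim \omega(2^{-m},f)_p .
\]
Set $S^*g=\sup_{\mathbf n}|S_{\mathbf n}g|$. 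The point is to bound $S^*$ on trigonometric polynomials of degree $N$ with a loss that is only logarithmic in $N$.

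For $p=1$ we use a Lebesgue-constant argument. For any $\mathbf n$, $S_{\mathbf n}g=S_{\mathbf n'}g$ with $n_j'=\min(n_j,N)$. Hence $S^*g\le\sum_{\mathbf n\in[0,N]^d}$ is not needed; instead we estimate $S^*g(x)\le\int|g(x-t)|\sup_{\mathbf n\le N}|D_{\mathbf n}(t)|\,dt$. The product Dirichlet kernel satisfies $\sup_{n\le N}|D_n(t)|\lesssim\min(N,1/|t|)$ in each variable, and the $L^1$ norm of this bound is $\lesssim\log N$. Therefore
\[
\|S^*g\|_1\lesssim(\log N)^d\|g\|_1 .
\]
It follows that $\|S^*\Delta_m f\|_1\lesssim m^d\,\omega(2^{-m},f)_1\lesssim A\,m^{-1-\varepsilon}$. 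This is summable, so $\sum_m S^*\Delta_m f\in L^1$ and in particular it is finite almost everywhere.

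For $p=2$ we instead use a Rademacher--Menshov argument in each variable, applied in dyadic form. For a polynomial of degree $N$ in each variable, orthogonality together with the $d$-parameter Menshov--Rademacher inequality gives
\[
\|S^*g\|_2\lesssim(\log N)^d\|g\|_2 .
\]
This loss is too large, so we exploit orthogonality of the blocks more carefully. Split $f$ by dyadic frequency rectangles: let $\delta_{\mathbf s}f$ be the piece of $f$ with frequencies $|k_j|\sim 2^{s_j}$. Then
\[
\sum_{\max_j s_j\ge m}\|\delta_{\mathbf s}f\|_2^2\lesssim\omega(2^{-m},f)_2^2 .
\]
Partial sums whose indices $n_j$ are dyadic are controlled by the strong maximal function, while the oscillation within a dyadic block in each variable costs a factor $\log 2^{s_j}$ only through Menshov--Rademacher. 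The Menshov--Rademacher loss enters squared, so $\|S^*\delta_{\mathbf s}f\|_2\lesssim\prod_j s_j\,\|\delta_{\mathbf s}f\|_2$ gives
\[
\|\sup|\cdot|\|_2^2\lesssim\sum_{\mathbf s}\prod_j s_j^{\,2}\,\|\delta_{\mathbf s}f\|_2^2 .
\]
Bounding the product crudely by $(\max_j s_j)^{2d}$ and grouping by $m=\max_j s_j$, Abel summation bounds the right-hand side by
\[
\sum_m m^{2d-1}\,\omega(2^{-m},f)_2^2\lesssim\sum_m m^{-1-2\varepsilon}<\infty .
\]
The number of $\mathbf s$ with $\max_j s_j=m$ is only about $m^{d-1}$, and it is absorbed by the same grouping, since we group by the level set rather than by individual $\mathbf s$. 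I expect this exponent bookkeeping, namely getting $d/2$ rather than $d$, to be the main obstacle. If the crude count above overcounts, the fallback is to apply the one-parameter Menshov--Rademacher inequality directly to the orthogonal family of "shells" $\Delta_m f$, using $\|\Delta_m f\|_2\lesssim\omega(2^{-m},f)_2$ with $d$ logs of loss. That gives the condition $\sum_m m^{d}\,\omega(2^{-m},f)_2^2<\infty$, which is implied by $\omega\lesssim(\log)^{-d/2-\varepsilon}$.

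Convergence then follows by density. In both cases the maximal operator of the tail $\sum_{m>M}\Delta_m f$ has norm tending to $0$ as $M\to\infty$. Meanwhile $V_Mf$ is a polynomial, so $S_{\mathbf n}V_Mf=V_Mf$ once $\min_j n_j$ exceeds its degree. Hence
\[
\limsup_{\min n_j\to\infty}|S_{\mathbf n}f-f|\le S^*(\text{tail})+|\text{tail}|,
\]
whose $L^p$ norm tends to $0$. This gives Pringsheim convergence almost everywhere.
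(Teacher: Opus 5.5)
Your overall scheme (telescoping de la Vall\'ee Poussin blocks, a finite maximal estimate with logarithmic loss, summation of the block maximal norms, then the density endgame) is the one the paper uses for Theorem \ref{thm:main}, of which Theorem Zh is the special case $\psi(t)=e^{-\varepsilon t}$. However, both parts of your bookkeeping fail, and part (ii) is missing an ingredient.

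\textbf{Part (i).} Your bound $\|S^*g\|_1\lesssim(\log N)^d\|g\|_1$ is correct; it is \eqref{eq:Mr-L1}. The sum over dyadic shells does not close, though. Since $\omega(2^{-m},f)_1\le A(m\log 2)^{-d-\varepsilon}$, you get $m^d\,\omega(2^{-m},f)_1\lesssim A\,m^{-\varepsilon}$, not $A\,m^{-1-\varepsilon}$. The series $\sum_m m^{-\varepsilon}$ diverges for $\varepsilon\le1$.

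Applying the maximal estimate to each dyadic shell separately and adding by the triangle inequality wastes a full logarithm. It would only give the theorem under $\omega(\delta,f)_1\lesssim(\log(1/\delta))^{-d-1-\varepsilon}$.

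The missing idea is to group the shells lacunarily in $\log N$. Take $g_k=\mathcal V_{N_k}f-\mathcal V_{N_{k-1}}f$ with $N_k=2^{2^k-1}$. Then $g_k$ has spectrum in $[-2^{2^k},2^{2^k}]^d$, so the maximal loss is only $(1+2^k)^d$. Meanwhile $\|g_k\|_1\lesssim\omega(1/N_{k-1},f)_1\lesssim A\,2^{-(k-1)(d+\varepsilon)}$. Hence $\|S^*g_k\|_1\lesssim A\,2^{-k\varepsilon}$, which is summable. Put in dyadic terms, the condition this scheme really needs is $\sum_m m^{d-1}\omega(2^{-m},f)_1<\infty$, one power of $m$ fewer than your count.

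\textbf{Part (ii).} Here the tool itself is the problem, not just the arithmetic. Menshov--Rademacher holds for arbitrary orthonormal systems, and its logarithmic loss is sharp for such systems. Because the loss enters squared, your bound $\sum_{\mathbf s}\prod_j s_j^2\|\delta_{\mathbf s}f\|_2^2$ is just the Menshov--Rademacher coefficient condition with weight $\prod_j\log^2(2+|k_j|)$.

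The hypothesis controls only $\sum_{\max_j s_j\ge m}\|\delta_{\mathbf s}f\|_2^2$. The mass may therefore sit at $s_1=\cdots=s_d$, so the crude bound by $(\max_j s_j)^{2d}$ costs nothing. Abel summation then gives $\sum_m m^{2d-1}\omega(2^{-m},f)_2^2\lesssim\sum_m m^{d-1-2\varepsilon}$, not $\sum_m m^{-1-2\varepsilon}$, and this diverges for $\varepsilon\le d/2$. This route recovers only the $L^1$ exponent $d$, not $d/2$, and no regrouping of blocks helps. Separately, blockwise bounds for $S^*\delta_{\mathbf s}f$ do not by themselves give a square-summed bound for $S^*f$.

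Your fallback fails too. First, $m^d\omega(2^{-m},f)_2^2\lesssim m^{-2\varepsilon}$ is not summable for $\varepsilon\le1/2$. Second, a one-parameter inequality along cubic shells says nothing about $S_{\mathbf n}$ for rectangles with unequal sides.

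What you need is a trigonometric-specific $L^2$ maximal inequality carrying only the first power of the logarithms, namely \eqref{eq:sjolin}. This is Zhizhiashvili's multidimensional Plessner theorem, or the Sj\"olin--Chen inequality. It gives $\|\mathcal M_r\|_{L^2\to L^2}\lesssim(1+r)^{d/2}$, and with it the lacunary blocks yield $\|S^*g_k\|_2\lesssim A\,2^{-k\varepsilon}$.

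Alternatively, since \eqref{eq:sjolin} is already a weighted $\ell^2$ bound, you can bound $\|S^*(f-\mathcal V_Mf)\|_2^2$ by $\sum_{\|\mathbf k\|_\infty>M}|\widehat f(\mathbf k)|^2\log^d(2+\|\mathbf k\|_\infty)$. This tends to $0$ by Abel summation and Jackson's inequality, because $\sum_m m^{d-1}\omega(2^{-m},f)_2^2\lesssim\sum_m m^{-1-2\varepsilon}<\infty$. Your density argument then finishes the proof.
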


This is one of the central results in Zhizhiashvili's theory of multiple
trigonometric Fourier series; see \cite{Zhizhiashvili1975,Zhizhiashvili1996}%
. Zhizhiashvili explicitly asked how sharp the logarithmic powers in %
\eqref{eq:Zh-p1} and \eqref{eq:Zh-p2} are. In particular, the exact
endpoint, where the positive number $\varepsilon$ is removed, is a natural
multidimensional analogue of a classical Zygmund-type problem in one
dimension.

The sharpness picture is delicate. Bakhbukh and Nikishin proved that there
are continuous functions with logarithmic modulus of continuity of order $%
1/\log(1/\delta)$ for which the double Fourier series diverges everywhere \cite{BakhbukhNikishin1973,Dyachenko2002}. In even
dimensions Bakhvalov later constructed continuous examples with logarithmic
moduli matching the expected critical powers and with rectangular divergence
in the $\lambda$-restricted sense \cite{Bakhvalov1997,Bakhvalov2002}. These
results show that the endpoint cannot be treated as a routine consequence of
uniform continuity.

A second major benchmark is Oskolkov's two-dimensional refinement (see \cite{Oskolkov1974}). In one of
its standard forms, Oskolkov's theorem says that if $f\in C(\mathbb{T}^{2})$
and the uniform modulus satisfies 
\begin{equation*}
\omega (\delta ,f)_{\infty }\,=o\left( \frac{1}{\log (1/\delta )\,\log \log
\log (e^{e}/\delta )}\right) ,\qquad \delta \rightarrow 0+,
\end{equation*}%
then the double Fourier series converges almost everywhere in the
rectangular sense; see \cite{Oskolkov1974} and the discussion in \cite%
{Bakhvalov2002,Dyachenko2002}. This theorem is genuinely two-dimensional.
The corresponding sharp Oskolkov-type assertion for dimensions $d>2$ appears
to remain open. This distinction is important below: our $L^p$ consequence
is a proved $d$-dimensional sufficient criterion with a summable secondary
factor, not a solution of the sharp higher-dimensional Oskolkov problem.

There is also a Marcinkiewicz--Plessner line of ideas. In dimension one,
Plessner's $L^2$ theorem and Marcinkiewicz's extension to $L^p$, $p>1$, give
almost everywhere estimates for Fourier partial sums in terms of Fourier
coefficients and moduli of continuity; see \cite{Bari1961,Zygmund2002}.
Zhizhiashvili \cite [Ch. 2]{Zhizhiashvili1996} extended the Plessner part to several variables and used it in
the proof of Theorem Zh (ii). He then asked for a
multidimensional analogue of Marcinkiewicz's theorem. In the present setting
this leads to the following Zhizhiashvili--Marcinkiewicz problem (see \cite[Ch.~2]{Zhizhiashvili1996}).

\begin{problem}[Zhizhiashvili--Marcinkiewicz problem]
\label{prob:Zh-Marcinkiewicz} Let $d\ge2$ and $1<p<2$. Does the logarithmic
condition 
\begin{equation}
\omega(\delta,f)_p\le \frac{A(f)}{(\log(1/\delta))^{d/p+\varepsilon}},
\qquad 0<\delta\le\frac12,  \label{eq:Zh-Marcinkiewicz-condition}
\end{equation}
with arbitrary $\varepsilon>0$, imply almost everywhere Pringsheim
convergence of the rectangular Fourier series of every $f\in L^p(\mathbb{T}%
^d)$?
\end{problem}

The main theorem below answers Problem \ref{prob:Zh-Marcinkiewicz}. It is a
near-endpoint result at the dimensionally correct logarithmic power and also
includes the endpoint spaces $p=1$ and $p=2$. Thus the improvement is new
even in the cases covered by Zhizhiashvili's original $L^1$ and $L^2$
theorem.

For $0<\delta\le1/2$ set 
\begin{equation}
\Lambda_1(\delta)=\log(1/\delta),\qquad
\Lambda_2(\delta)=\log\log(e^e/\delta),\qquad
\Lambda_3(\delta)=\log\log\log(e^{e^e}/\delta).
\label{eq:Lambda-definitions}
\end{equation}
The constants inside the logarithms are used only to keep the expressions
positive on the whole interval $0<\delta\le1/2$.

\begin{theorem}[Main summable Zhizhiashvili endpoint theorem]
\label{thm:main} Let $d\ge2$, $1\le p\le2$, and let $\psi:[1,\infty)\to(0,%
\infty)$ be non-increasing and satisfy 
\begin{equation}
\sum_{k=1}^{\infty}\psi(k)<\infty .  \label{eq:psi-summable}
\end{equation}
Let $f\in L^p(\mathbb{T}^d)$. Assume that 
\begin{equation}
\omega(\delta,f)_p\le
A(f)\,\Lambda_1(\delta)^{-d/p}\,\psi(\Lambda_2(\delta)), \qquad
0<\delta\le\frac12 .  \label{eq:main-condition}
\end{equation}
Then the rectangular partial sums 
converge to $f$ almost everywhere in the sense of Pringsheim; that is, 
\begin{equation*}
S_{\mathbf{n}}f(x)\to f(x)\qquad \text{for almost every }x\in\mathbb{T}^d
\end{equation*}
as $\min_{1\le j\le d}n_j\to\infty$.
\end{theorem}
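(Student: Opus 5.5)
We decompose $f$ dyadically in frequency, using blocks whose size is doubly exponential, and control the resulting pieces with a maximal-function estimate. Set $N_s=2^{2^s}$. Let $f_s$ be the $\mathbf N$-th de la Vallée Poussin (or Fejér-type) mean of $f$ with all $d$ indices equal to $N_s$, and write
\begin{equation*}
f=f_0+\sum_{s\ge0}(f_{s+1}-f_s).
\end{equation*}
By Jackson's inequality,
\begin{equation*}
\|f-f_s\|_p\lesssim\omega(1/N_s,f)_p\le A(f)\,2^{-sd/p}\,\psi(\Lambda_2(1/N_s)),
\qquad \Lambda_2(1/N_s)\asymp s,
\end{equation*}
so each block $g_s=f_{s+1}-f_s$, a trigonometric polynomial of degree $\lesssim N_{s+1}$ in each variable, satisfies $\|g_s\|_p\lesssim A\,2^{-sd/p}\psi(cs)$. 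The key point is the following maximal estimate for the rectangular sums of a trigonometric polynomial $g$ of degree $\le M$ in each variable:
\begin{equation*}
\Bigl\|\sup_{\mathbf n}|S_{\mathbf n}g|\Bigr\|_{L^{p,\infty}}\lesssim (\log M)^{d/p}\,\|g\|_p ,\qquad 1\le p\le2 .
\end{equation*}
Two endpoints give this. At $p=1$ we use the trivial bound $|S_{\mathbf n}g|\le \|D_{\mathbf n}\|_1\|g\|_\infty$ in the dual form, i.e.\ the product of Lebesgue constants $(\log M)^d$, combined with a weak-type argument. At $p=2$ we use the multidimensional Rademacher--Menshov/Plessner argument (the Zhizhiashvili extension quoted above), which loses $(\log M)^{d/2}$. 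We then interpolate, for instance through Marcinkiewicz-type real interpolation applied to the linearized maximal operator. With $M=N_{s+1}$ we have $\log M\asymp 2^{s}$, and the loss $(\log M)^{d/p}\asymp 2^{sd/p}$ exactly cancels the Jackson gain, leaving
\begin{equation*}
\Bigl|\Bigl\{\sup_{\mathbf n}|S_{\mathbf n}g_s|>\lambda_s\Bigr\}\Bigr|\lesssim\bigl(A\,\psi(cs)/\lambda_s\bigr)^p .
\end{equation*}

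Next we sum over blocks. Choosing $\lambda_s=\psi(cs)^{1/2}$ does not directly yield summability for general $p$, so we proceed differently. We fix $\epsilon>0$ and a tail index $s_0$, and estimate the tail $\sum_{s\ge s_0}g_s$ as a single function: the weak-type quasi-norm is not subadditive, but on $L^{p,\infty}$ with $p>1$ it is equivalent to a norm. For $p=1$ we instead use a Stein–Weiss type summation lemma for weak-$L^1$ with $\ell^1$ coefficients, paying at most a logarithmic factor in the number of terms. This factor is the step I expect to be the main obstacle: summability of $\psi$ is exactly what the cancellation leaves, and any extra $\log s$ would destroy it. So we must avoid Stein–Weiss losses. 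My first attempt is to build the maximal estimate so that it is \emph{strong} type $(2,2)$ at $p=2$ with loss $(\log M)^{d/2}$, and restricted weak type at $p=1$. We then handle all blocks simultaneously through a single linear operator acting on $\sum_s g_s$ with the weights $2^{sd/p}$ absorbed, i.e.\ we bound $\sup_{\mathbf n}|S_{\mathbf n}(\sum_{s\ge s_0} g_s)|$ by $\sum_s \sup_{\mathbf n}|S_{\mathbf n} g_s|$ pointwise and estimate the $L^{p,\infty}$ quasinorm of a sum of functions each of whose contribution is $\lesssim\psi(cs)$, using the $p$-convexity of $L^{p,\infty}$ for $p>1$, and for $p=1$ the Kalton/Stein–Weiss bound with $\sum\psi(cs)\log(1/\psi(cs))$. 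If the latter diverges, we refine the decomposition at $p=1$ by grouping blocks where $\psi$ is roughly constant.

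Finally we conclude by density. For fixed $\mathbf n$ with $\min n_j\ge N_{s_0+1}$, the difference $S_{\mathbf n}f-f$ equals $S_{\mathbf n}(f-f_{s_0})-(f-f_{s_0})$ plus $S_{\mathbf n}f_{s_0}-f_{s_0}$, which vanishes once $\min n_j$ exceeds the degree of $f_{s_0}$. Hence
\begin{equation*}
\Bigl|\Bigl\{\limsup_{\min n_j\to\infty}|S_{\mathbf n}f-f|>2\lambda\Bigr\}\Bigr|\lesssim \lambda^{-p}\Bigl(\sum_{s\ge s_0}\psi(cs)\Bigr)^{p}+\lambda^{-p}\|f-f_{s_0}\|_p^p\to0
\end{equation*}
as $s_0\to\infty$, and this gives almost everywhere Pringsheim convergence. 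The monotonicity of $\psi$ is used to compare $\psi(\Lambda_2(\delta))$ with $\psi(cs)$ and to replace the sum over $s$ by the sum in \eqref{eq:psi-summable}.
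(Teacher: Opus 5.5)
Your overall architecture matches the paper's: doubly exponential blocks $N_s=2^{2^s}$, de la Vall\'ee Poussin means with Jackson's inequality, a maximal bound whose loss $(\log M)^{d/p}$ exactly cancels the gain $2^{-sd/p}$, and a tail argument. As written, however, the proposal has a genuine gap at $p=1$, with two parts.

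First, your justification of the $L^1$ endpoint does not work. The Lebesgue-constant bound $\|S_{\mathbf n}g\|_\infty\le\|D_{\mathbf n}\|_1\|g\|_\infty$, or its dual $\|S_{\mathbf n}g\|_1\le\|D_{\mathbf n}\|_1\|g\|_1$, controls each $S_{\mathbf n}$ separately. It says nothing about $\sup_{\mathbf n}|S_{\mathbf n}g|$. Since you obtain the range $1<p<2$ by interpolating from this endpoint, that range rests on the same faulty step.

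Second, having settled for weak type at $p=1$, you cannot sum the blocks. $L^{1,\infty}$ is not normable, and the Stein--Weiss/Kalton bound you invoke, $\sum_s\psi(cs)\log(1/\psi(cs))$, diverges for admissible $\psi$. For example, with $\psi(t)=t^{-1}(\log(e+t))^{-3/2}$ the terms behave like $s^{-1}(\log s)^{-1/2}$. ``Grouping blocks where $\psi$ is roughly constant'' is not an argument. So your final estimate $|\{\limsup|S_{\mathbf n}f-f|>2\lambda\}|\lesssim\lambda^{-1}\sum_{s\ge s_0}\psi(cs)+\cdots$ is unproved for $p=1$.

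The missing observation is that the $L^1$ endpoint holds in \emph{strong} form with the same loss, which makes all weak-type bookkeeping unnecessary. Put $D_M^*(t)=\sup_{0\le n\le M}|D_n(t)|\le C\min\{M+1,|t|_{\mathbb T}^{-1}\}$, so that $\|D_M^*\|_{L^1(\mathbb T)}\le C\log(2+M)$. Since $\sup_{0\le n_j\le M}|S_{\mathbf n}f|\le|f|*(D_M^*\otimes\cdots\otimes D_M^*)$ pointwise, Young's inequality gives $\|\sup_{0\le n_j\le M}|S_{\mathbf n}f|\|_1\le C_d(\log(2+M))^d\|f\|_1$ for every $f\in L^1$.

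Pair this with the strong $L^2$ bound, applied to $S_{(M,\ldots,M)}f$. That bound should be the trigonometric product-logarithmic inequality $\|\sup_{\mathbf n}|S_{\mathbf n}Q|\|_2^2\le C_d\sum_{\mathbf k}|\widehat Q(\mathbf k)|^2\prod_j\log(2+|k_j|)$ of Sj\"olin and Chen. Rademacher--Menshov alone would lose $(\log M)^d$ and destroy the cancellation.

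Now apply Marcinkiewicz interpolation to this fixed sublinear operator, defined on all of $L^1$ and not just on polynomials of degree $\le M$. This gives strong type $(p,p)$ with constant $C_{p,d}(\log(2+M))^{d/p}$ for all $1\le p\le2$. Hence $\|\sup_{\mathbf n}|S_{\mathbf n}g_s|\|_1\le\|\sup_{\mathbf n}|S_{\mathbf n}g_s|\|_p\lesssim A(f)\,\psi(cs)$, which is summable. By monotone convergence, $\sum_s\sup_{\mathbf n}|S_{\mathbf n}g_s|<\infty$ almost everywhere, and your tail argument then closes pointwise. This is exactly the paper's route.
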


\begin{corollary}[Double-logarithmic endpoint criterion]
\label{cor:double-log} Let $d\ge2$, $1\le p\le2$, $\eta>0$, and let $f\in
L^p(\mathbb{T}^d)$. If 
\begin{equation}
\omega(\delta,f)_p\le \frac{A(f)}{(\log(1/\delta))^{d/p}(\log\log(e^e/%
\delta))^{1+\eta}}, \qquad 0<\delta\le\frac12,
\label{eq:double-log-condition}
\end{equation}
then $S_{\mathbf{n}}f(x)\to f(x)$ almost everywhere in the sense of
Pringsheim.
\end{corollary}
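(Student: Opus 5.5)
The corollary follows from Theorem~\ref{thm:main} with a specific choice of the secondary weight. I would take
\begin{equation*}
\psi(t)=t^{-(1+\eta)},\qquad t\ge1 .
\end{equation*}
This function is positive and non-increasing on $[1,\infty)$, and
\begin{equation*}
\sum_{k\ge1}\psi(k)=\sum_{k\ge1}k^{-1-\eta}<\infty
\end{equation*}
because $\eta>0$. So \eqref{eq:psi-summable} holds.

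Next I check that $\psi$ can be evaluated at $\Lambda_2(\delta)$. For $0<\delta\le 1/2$ we have $e^e/\delta\ge 2e^e>e^e$. Hence $\log(e^e/\delta)>e$, and therefore
\begin{equation*}
\Lambda_2(\delta)=\log\log(e^e/\delta)>1 .
\end{equation*}
So $\Lambda_2(\delta)$ lies in the domain $[1,\infty)$ of $\psi$.

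With this choice,
\begin{equation*}
\Lambda_1(\delta)^{-d/p}\,\psi(\Lambda_2(\delta))=(\log(1/\delta))^{-d/p}\,(\log\log(e^e/\delta))^{-1-\eta}.
\end{equation*}
The right-hand side is exactly the factor appearing in \eqref{eq:double-log-condition}. Thus hypothesis \eqref{eq:double-log-condition} is literally \eqref{eq:main-condition} with the same constant $A(f)$.

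All the other hypotheses of Theorem~\ref{thm:main} are unchanged: $d\ge2$, $1\le p\le2$, and $f\in L^p(\mathbb{T}^d)$. Theorem~\ref{thm:main} therefore gives $S_{\mathbf n}f\to f$ almost everywhere in the sense of Pringsheim, which is the claim of the corollary. The only step that needs any care is the domain check $\Lambda_2(\delta)\ge1$ made above; no other obstacle arises.
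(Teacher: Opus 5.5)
Your proposal is correct and is exactly the paper's proof: apply Theorem~\ref{thm:main} with $\psi(t)=t^{-1-\eta}$, which is non-increasing and summable. Your extra check that $\Lambda_2(\delta)>1$ on $0<\delta\le\frac12$, so that $\psi(\Lambda_2(\delta))$ is defined, is a detail the paper leaves implicit.
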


\begin{corollary}[$d$-dimensional $L^p$ Oskolkov-type consequence] 
\label{cor:L2-oskolkov} Let $d\ge2$, $\eta>0$, $1\le p\le2$, and let $f\in L^p(\mathbb{T}%
^d)$. If 
\begin{equation}
\omega(\delta,f)_p\le \frac{A(f)}{(\log(1/\delta))^{d/p}
\log\log(e^e/\delta) (\log\log\log(e^{e^e}/\delta))^{1+\eta}}, \qquad
0<\delta\le\frac12,  \label{eq:L2-oskolkov-condition}
\end{equation}
then the $d$-dimensional rectangular Fourier series of $f$ converges almost
everywhere in the sense of Pringsheim.
\end{corollary}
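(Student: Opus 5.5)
We will deduce Corollary \ref{cor:L2-oskolkov} from Theorem \ref{thm:main}. The plan is to choose a summable secondary weight $\psi$ whose value at $\Lambda_2(\delta)$ dominates $\Lambda_2(\delta)^{-1}\Lambda_3(\delta)^{-1-\eta}$ up to a constant factor. We take
\begin{equation*}
\psi(t)=\frac{1}{t\,\bigl(\log(e+t)\bigr)^{1+\eta}},\qquad t\ge1 .
\end{equation*}
This $\psi$ is positive and non-increasing on $[1,\infty)$, since both $t$ and $\log(e+t)$ increase. It is summable, because $\sum_k k^{-1}(\log(e+k))^{-1-\eta}<\infty$ by the integral test: the substitution $s=\log(e+t)$ reduces it to $\int^\infty s^{-1-\eta}\,ds<\infty$ when $\eta>0$. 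Hence $\psi$ satisfies \eqref{eq:psi-summable}.

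Next we compare $\log(e+\Lambda_2(\delta))$ with $\Lambda_3(\delta)$. Put $L=\log(1/\delta)\ge\log 2$. Then
\begin{equation*}
\Lambda_2(\delta)=\log(e+L),\qquad \Lambda_3(\delta)=\log\log(e^{e^e}/\delta)=\log\log(e^e+L).
\end{equation*}
Since $\log(e+L)\le\log(e^e+L)$ and $\log(e^e+L)\ge e^e>e$, we get
\begin{equation*}
\log(e+\Lambda_2)\le\log\bigl(e+\log(e^e+L)\bigr)\le\log\bigl(2\log(e^e+L)\bigr)=\log2+\Lambda_3 .
\end{equation*}
Moreover $\Lambda_3\ge\log(e^e)=e\ge1$. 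It follows that $\log(e+\Lambda_2(\delta))\le(1+\log 2)\,\Lambda_3(\delta)$ uniformly for $0<\delta\le1/2$. We also note that $\Lambda_2(\delta)\ge1$, so $\psi(\Lambda_2(\delta))$ is defined.

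Combining these, hypothesis \eqref{eq:L2-oskolkov-condition} gives
\begin{equation*}
\omega(\delta,f)_p\le \frac{A(f)}{\Lambda_1^{d/p}\,\Lambda_2\,\Lambda_3^{1+\eta}}
\le (1+\log2)^{1+\eta}A(f)\,\Lambda_1(\delta)^{-d/p}\,\psi(\Lambda_2(\delta)).
\end{equation*}
This is condition \eqref{eq:main-condition} with the constant $(1+\log2)^{1+\eta}A(f)$ in place of $A(f)$. Theorem \ref{thm:main} then yields almost everywhere Pringsheim convergence of $S_{\mathbf n}f$ to $f$.

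The only point requiring care is the uniform comparison of $\log(e+\Lambda_2)$ with $\Lambda_3$ on the whole range $0<\delta\le 1/2$, not merely asymptotically as $\delta\to0$. The normalizing constants $e^e$ and $e^{e^e}$ in \eqref{eq:Lambda-definitions} make this an elementary inequality, as shown above.
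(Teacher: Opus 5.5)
Your proof is correct and is essentially the paper's argument: the same choice $\psi(t)=1/(t(\log(e+t))^{1+\eta})$ and the comparison $\log(e+\Lambda_2(\delta))\asymp\Lambda_3(\delta)$, which you make explicit with the uniform constant $1+\log 2$. Two of your stated intermediate bounds are false as written: the correct bounds are $\log(e^e+L)\ge e$ (not $\ge e^e$) and $\Lambda_3(\delta)\ge\log\log(e^e)=1$ (not $\ge e$). Your chain of inequalities only uses these weaker true bounds, so the argument is unaffected.
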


\begin{remark}[The Oskolkov comparison]
\label{rem:oskolkov-comparison}
Corollary \ref{cor:L2-oskolkov} may be viewed as a $d$-dimensional
Oskolkov-type analogue at the $L^p$ Zhizhiashvili scale. The comparison has
the following precise meaning. Oskolkov's classical theorem is a
uniform-norm theorem for continuous functions on $\mathbb{T}^2$, with a
different secondary logarithmic scale. Corollary \ref{cor:L2-oskolkov} is
instead an $L^p(\mathbb{T}^d)$ sufficient criterion valid in every dimension
$d\ge2$. Thus Corollary \ref{cor:L2-oskolkov} should not be read as a
solution of the sharp higher-dimensional Oskolkov problem.
\end{remark}

\begin{remark}
Theorem \ref{thm:main} is stronger than a solution of Problem \ref%
{prob:Zh-Marcinkiewicz}, because it replaces the arbitrary positive
logarithmic margin in \eqref{eq:Zh-Marcinkiewicz-condition} by the critical
power and an arbitrary summable iterated-logarithmic weight. Corollary \ref%
{cor:double-log} gives the simple double-logarithmic form, while Corollary %
\ref{cor:L2-oskolkov} gives the $L^{p}$ Oskolkov-type form emphasized above.
In particular, the theorem contains the endpoint cases $p=1$ and $p=2$ and
improves the sufficient conditions in Theorem Zh
from $(\log (1/\delta ))^{-d/p-\varepsilon }$ to the sharper critical scale
with summable secondary factors. It should be noted that the case $p=1$ of Corollary \ref{cor:L2-oskolkov} was established by Stokolos \cite{Stok}. For the case $p=2$, we also refer to \cite{Gog-On2020}, where almost everywhere convergence of multiple Fourier series with respect to a general orthonormal system was established.

\end{remark}

%The proof follows an approximation-and-maximal-operator strategy, but one
%technical point is crucial. One cannot obtain
%\begin{equation*}
%\left\|\mathcal{M}P\right\|_{p}\le C_{p,d}(1+r)^{d/p}\left\|P\right\|_{p}
%\end{equation*}
%by interpolating a maximal operator whose definition depends on the spectrum
%of $P$. Instead, for each fixed $r$ we introduce the genuine sublinear
%operator
%\begin{equation*}
%\mathcal{M}_r f(x)=\sup_{0\le n_j\le2^r}|S_{\mathbf{n}}f(x)|,
%\end{equation*}
%prove its $L^1$ and $L^2$ estimates, interpolate this fixed operator by the
%Marcinkiewicz interpolation theorem, and only then apply the result to
%trigonometric polynomials.

\section{Notation and rectangular partial sums}

Let 
\begin{equation*}
\mathbb{T}=\mathbb{R}/(2\pi\mathbb{Z}),\qquad \mathbb{T}^d=(\mathbb{R}/(2\pi%
\mathbb{Z}))^d,
\end{equation*}
equipped with normalized Haar measure $dm_d(x)=(2\pi)^{-d}\,dx$. For $t\in%
\mathbb{T}$ write 
\begin{equation*}
|t|_\mathbb{T}=\operatorname{dist}(t,2\pi\mathbb{Z})\in[0,\pi],
\end{equation*}
and for $h=(h_1,\ldots,h_d)\in\mathbb{T}^d$ put 
\begin{equation*}
\|h\|_{\mathbb{T}^d}=\max_{1\le j\le d}|h_j|_\mathbb{T}.
\end{equation*}

For $\mathbf{k}=(k_1,\ldots,k_d)\in\mathbb{Z}^d$ and $x=(x_1,\ldots,x_d)\in%
\mathbb{T}^d$, set 
\begin{equation*}
\mathbf{k}\cdot x=k_1x_1+\cdots+k_dx_d,\qquad e_{\mathbf{k}}(x)=e^{i\mathbf{k%
}\cdot x}.
\end{equation*}
If $f\in L^1(\mathbb{T}^d)$, its Fourier coefficients are 
\begin{equation*}
\widehat f(\mathbf{k})=\int_{\mathbb{T}^d}f(x)e^{-i\mathbf{k}\cdot
x}\,dm_d(x), \qquad \mathbf{k}\in\mathbb{Z}^d.
\end{equation*}
For $\mathbf{n}=(n_1,\ldots,n_d)\in\mathbb{N}_0^d$, the symmetric
rectangular partial sum is 
\begin{equation}
S_{\mathbf{n}}f(x)=\sum_{|k_1|\le n_1}\cdots\sum_{|k_d|\le n_d}\widehat f(%
\mathbf{k})e^{i\mathbf{k}\cdot x}.  \label{eq:rectangular-sum}
\end{equation}
Equivalently, 
\begin{equation*}
S_{\mathbf{n}}f=f*(D_{n_1}\otimes\cdots\otimes D_{n_d}),
\end{equation*}
where 
\begin{equation*}
D_n(t)=\sum_{|m|\le n}e^{imt}
\end{equation*}
is the one-dimensional Dirichlet kernel.

For $1\le p<\infty$, 
\begin{equation*}
\left\|f\right\|_{p}=\left(\int_{\mathbb{T}^d}|f(x)|^p\,dm_d(x)\right)^{1/p},
\end{equation*}
and for $p=\infty$ we use the essential supremum norm. The modulus of
continuity is 
\begin{equation}
\omega(\delta,f)_p=\sup_{\|h\|_{\mathbb{T}^d}\le\delta}\left\|f(\cdot+h)-f(%
\cdot)\right\|_{p}, \qquad \delta>0.  \label{eq:modulus}
\end{equation}
All logarithms are natural. Constants denoted by $C$ may change from line to
line; dependence on parameters is indicated by subscripts.

\section{de la Vall\'ee Poussin approximation}

We recall the product de la Vall\'ee Poussin means and the approximation
estimate needed in the proof. Let 
\begin{equation*}
F_N(t)=\sum_{|m|<N}\left(1-\frac{|m|}{N}\right)e^{imt},\qquad N\ge1,
\end{equation*}
be the Fej\'er kernel. With normalized measure, $F_N\ge0$, $\int_\mathbb{T }%
F_N\,dm_1=1$, and $\left\|F_N\right\|_{1}=1$. Define 
\begin{equation}
V_N(t)=2F_{2N}(t)-F_N(t),\qquad N\ge1.  \label{eq:vp-kernel}
\end{equation}
Its Fourier multiplier $v_N$ is given by 
\begin{equation}
v_N(m)= 
\begin{cases}
1, & |m|\le N, \\ 
2-|m|/N, & N<|m|<2N, \\ 
0, & |m|\ge2N.%
\end{cases}
\label{eq:vp-multiplier}
\end{equation}
Thus $V_N$ reproduces frequencies $|m|\le N$ and has spectrum in $(-2N,2N)$.
Also 
\begin{equation}
\left\|V_N\right\|_{L^1(\mathbb{T})}\le
2\left\|F_{2N}\right\|_{1}+\left\|F_N\right\|_{1}=3.  \label{eq:vp-l1}
\end{equation}

For $d\ge2$, put 
\begin{equation*}
V_N^{[d]}(x)=\prod_{j=1}^dV_N(x_j),
\end{equation*}
and define 
\begin{equation}
\mathcal{V}_N f=f*V_N^{[d]}.  \label{eq:vp-mean}
\end{equation}
Equivalently, 
\begin{equation}
\widehat{\mathcal{V}_N f}(\mathbf{k})=\left(\prod_{j=1}^d
v_N(k_j)\right)\widehat f(\mathbf{k}), \qquad \mathbf{k}\in\mathbb{Z}^d.
\label{eq:vp-fourier}
\end{equation}
The following elementary properties will be used repeatedly.

\begin{lemma}[de la Vall\'ee Poussin properties]
\label{lem:vp-properties} Let $1\le p\le\infty$ and $N\ge1$.

\begin{enumerate}[label=\textup{(V\arabic*)},leftmargin=2.8em]
\item $\mathcal{V}_N f$ has spectrum contained in $[-2N,2N]^d$.

\item If $Q$ is a trigonometric polynomial with spectrum contained in $%
[-N,N]^d$, then $\mathcal{V}_N Q=Q$.

\item $\left\|\mathcal{V}_N f\right\|_{p}\le 3^d\left\|f\right\|_{p}$.

\item If $1\le p<\infty$, then 
\begin{equation}
\left\|f-\mathcal{V}_N f\right\|_{p}\le C_{p,d}\,\omega(1/N,f)_p, \qquad
N\ge2.  \label{eq:vp-jackson}
\end{equation}
\end{enumerate}
\end{lemma}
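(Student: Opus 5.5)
The four properties will be checked one at a time. Everything follows from the multiplier formula \eqref{eq:vp-multiplier}, the product identity \eqref{eq:vp-fourier}, and the bound \eqref{eq:vp-l1}.

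For (V1), if some $|k_j|\ge 2N$, then $v_N(k_j)=0$, so $\widehat{\mathcal V_N f}(\mathbf k)=0$. Hence the spectrum of $\mathcal V_N f$ lies in $(-2N,2N)^d\subset[-2N,2N]^d$. For (V2), let $Q$ have spectrum in $[-N,N]^d$. Every frequency $\mathbf k$ of $Q$ satisfies $|k_j|\le N$ for all $j$, so $\prod_j v_N(k_j)=1$. The Fourier coefficients of $\mathcal V_NQ$ and $Q$ therefore agree, and the two polynomials coincide.

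For (V3), apply Minkowski's integral inequality (Young's inequality for convolution on $\mathbb T^d$ with normalized measure):
\[
\|f*V_N^{[d]}\|_p\le\|V_N^{[d]}\|_{L^1(\mathbb T^d)}\|f\|_p .
\]
By Fubini, $\|V_N^{[d]}\|_{L^1(\mathbb T^d)}=\|V_N\|_{L^1(\mathbb T)}^d\le 3^d$. This holds for all $1\le p\le\infty$.

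The main step is (V4). I would use the standard Jackson-type argument in two steps.

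\emph{Step 1.} Produce a trigonometric polynomial $Q$ with spectrum in $[-N,N]^d$ such that $\|f-Q\|_p\le C_{p,d}\,\omega(1/N,f)_p$. For this, take $Q=f*K_m^{[d]}$, where $K_m$ is a nonnegative one-dimensional Jackson kernel: $K_m(t)=c_m\bigl(\sin(mt/2)/\sin(t/2)\bigr)^4$, with $m\approx N/2$ so that its degree is at most $N$, normalized so that $\int K_m\,dm_1=1$. This kernel satisfies the moment bound
\[
\int_{\mathbb T}(1+m|t|_{\mathbb T})K_m(t)\,dm_1(t)\le C .
\]
Write
\[
f-Q=\int\bigl(f-f(\cdot-t)\bigr)K_m^{[d]}(t)\,dm_d(t),
\]
and apply Minkowski's inequality. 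This gives $\|f-Q\|_p\le\int\omega(\|t\|_{\mathbb T^d},f)_p\,K_m^{[d]}(t)\,dm_d(t)$. Next use subadditivity in the form $\omega(\lambda\delta,f)_p\le(1+\lambda)\omega(\delta,f)_p$, together with $\|t\|_{\mathbb T^d}\le\sum_j|t_j|_{\mathbb T}$. This bounds the integral by $\omega(1/N,f)_p\int\bigl(1+N\sum_j|t_j|\bigr)K_m^{[d]}(t)\,dm_d(t)$. The first-moment estimate for each factor then gives a constant depending only on $d$.

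\emph{Step 2.} By (V2), $\mathcal V_NQ=Q$, so
\[
f-\mathcal V_Nf=(f-Q)-\mathcal V_N(f-Q).
\]
By (V3), $\|f-\mathcal V_Nf\|_p\le(1+3^d)\|f-Q\|_p$, which combined with Step 1 gives \eqref{eq:vp-jackson}.

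The only delicate point is the moment estimate for the Jackson kernel. I would verify it directly from $c_m\asymp m^{-3}$ and $\bigl|\sin(mt/2)/\sin(t/2)\bigr|\le\min(m,\pi/|t|)$. This gives $\int|t|K_m\lesssim m^{-3}\bigl(m^4\cdot m^{-2}+\int_{1/m}^{\pi}t^{-3}\,dt\bigr)\lesssim 1/m$, with the two pieces coming from $|t|\le1/m$ and $1/m\le|t|\le\pi$. The restriction $N\ge2$ ensures $m\ge1$.
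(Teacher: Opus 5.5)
Your proposal is correct and follows the paper's proof. (V1)--(V3) come from the multiplier formula and Young's inequality with $\|V_N^{[d]}\|_1\le 3^d$, and (V4) uses the same reduction $f-\mathcal V_Nf=(f-Q)-\mathcal V_N(f-Q)$ with $\mathcal V_NQ=Q$. The only difference is that the paper cites the multidimensional Jackson inequality $E_N(f)_p\le C_{p,d}\,\omega(1/N,f)_p$ and takes a near-best approximant, whereas you build $Q=f*K_m^{[d]}$ from a product Jackson kernel and check the first-moment bound directly. This makes your argument self-contained and gives a constant independent of $p$.
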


\begin{proof}
The first two assertions follow directly from the multiplier formula %
\eqref{eq:vp-multiplier}. The third follows from \eqref{eq:vp-l1} and
Young's inequality, because $\left\|V_N^{[d]}\right\|_{1}\le3^d$.

For the last assertion, let $E_N(f)_p$ be the best $L^p$ approximation of $f$
by trigonometric polynomials with spectrum contained in $[-N,N]^d$. The
multidimensional Jackson inequality gives 
\begin{equation*}
E_N(f)_p\le C_{p,d}\omega(1/N,f)_p;
\end{equation*}
see, for example, \cite{DeVoreLorentz1993,Zygmund2002}. Choose $Q_N$ with $%
\left\|f-Q_N\right\|_{p}\le2E_N(f)_p$. Since $\mathcal{V}_N Q_N=Q_N$, 
\begin{equation*}
f-\mathcal{V}_N f=(f-Q_N)-\mathcal{V}_N(f-Q_N),
\end{equation*}
and the $L^p$ boundedness of $\mathcal{V}_N$ yields \eqref{eq:vp-jackson}.
\end{proof}

\section{Finite rectangular maximal estimates}

For $r\in\mathbb{N}_0$ define the fixed finite rectangular maximal operator 
\begin{equation}
\mathcal{M}_r f(x)=\sup_{0\le n_1,\ldots,n_d\le2^r}|S_{\mathbf{n}}f(x)|.
\label{eq:Mr-definition}
\end{equation}
This definition is independent of the spectrum of any polynomial to which it
will later be applied. 

We also write 
\begin{equation}
\mathcal{M}P(x)=\sup_{\mathbf{n}\in\mathbb{N}_0^d}|S_{\mathbf{n}}P(x)|
\label{eq:Mrect-polynomial}
\end{equation}
for a trigonometric polynomial $P$.

\begin{proposition}[Finite maximal estimate]
\label{prop:finite-max} Let $d\ge2$ and $r\in\mathbb{N}_0$.

\begin{enumerate}[label=\textup{(\roman*)},leftmargin=2.2em]
\item For every $f\in L^1(\mathbb{T}^d)$, 
\begin{equation}
\left\|\mathcal{M}_r f\right\|_{1}\le C_d(1+r)^d\left\|f\right\|_{1}.
\label{eq:Mr-L1}
\end{equation}

\item For every $f\in L^2(\mathbb{T}^d)$, 
\begin{equation}
\left\|\mathcal{M}_r f\right\|_{2}\le C_d(1+r)^{d/2}\left\|f\right\|_{2}.
\label{eq:Mr-L2}
\end{equation}

\item For $1<p<2$ and every $f\in L^p(\mathbb{T}^d)$, 
\begin{equation}
\left\|\mathcal{M}_r f\right\|_{p}\le C_{p,d}(1+r)^{d/p}\left\|f\right\|_{p}.
\label{eq:Mr-Lp}
\end{equation}
\end{enumerate}

Consequently, if $P$ is a trigonometric polynomial with $\operatorname{supp}\widehat
P\subset[-2^r,2^r]^d$, then 
\begin{equation}
\left\|\mathcal{M}P\right\|_{p}\le C_{p,d}(1+r)^{d/p}\left\|P\right\|_{p},
\qquad 1\le p\le2.  \label{eq:poly-max-p}
\end{equation}
\end{proposition}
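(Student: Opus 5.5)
The three estimates are proved separately: (i) by Lebesgue constants, (ii) by iterating one-dimensional Kolmogorov--Seliverstov--Plessner bounds, (iii) by Marcinkiewicz interpolation. The polynomial bound then follows because $S_{\mathbf n}P$ stabilizes once the $n_j$ exceed $2^r$.

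\emph{$L^1$ bound.} Pointwise, $|S_{\mathbf n}f(x)|\le (|f|*\prod_j|D_{n_j}|)(x)$. The supremum over $0\le n_j\le 2^r$ of $\prod_j|D_{n_j}(t_j)|$ factors as $\prod_j \sup_{n\le 2^r}|D_n(t_j)|$, and the one-dimensional function $D^*_r(t)=\sup_{n\le 2^r}|D_n(t)|$ obeys $D^*_r(t)\le \min(2^{r+1}+1,\pi/|t|_{\mathbb T})$. Hence $\|D^*_r\|_{L^1(\mathbb T)}\le C(1+r)$. Therefore $\mathcal M_rf\le |f|*\bigotimes_j D^*_r$, and Young's inequality gives \eqref{eq:Mr-L1} with $C_d(1+r)^d$.

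\emph{$L^2$ bound.} Here one needs a square-root gain. The key one-dimensional input is the Rademacher--Menshov/Kolmogorov--Seliverstov--Plessner type estimate
\[
\Big\|\sup_{n\le 2^r}|S_nf|\Big\|_{L^2(\mathbb T)}\le C(1+r)^{1/2}\|f\|_2 .
\]
This follows from the dyadic decomposition of $[0,2^r]$: any $n\le2^r$ is a sum of at most $r+1$ dyadic blocks from the dyadic tree, and Cauchy--Schwarz plus orthogonality gives $\sup_n|S_nf|^2\le (r+1)\sum_{\text{levels}}\sum_{I}|\Delta_If|^2$. This already yields $(r+1)$, not $(r+1)^{1/2}$, if done naively over all levels. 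The sharper $(1+r)^{1/2}$ comes from Carleson--Hunt: $\|\sup_n|S_nf|\|_2\le C\|f\|_2$ with no loss at all in one variable. For the $d$-dimensional version I would iterate one variable at a time. Write $S_{\mathbf n}=S^{(1)}_{n_1}\circ\cdots\circ S^{(d)}_{n_d}$. The supremum over $n_1$ is handled in $L^2(dx_1)$ by the one-dimensional maximal bound applied to the $\ell^\infty$-valued function $\sup_{n_2,\dots,n_d}|\cdots|$.

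The main obstacle is that sup does not commute with the inner operators. The standard fix is to linearize the inner suprema over the finite index set $\{0,\dots,2^r\}^{d-1}$ and replace them by an $\ell^2$ sum over dyadic differences. Concretely, in variables $2,\dots,d$ one uses the Rademacher--Menshov-type chaining bound $\sup_{n\le 2^r}|a_n|\le |a_0|+\sum_{k=0}^{r}\max_j|a_{j2^{r-k}}-a_{(j-1)2^{r-k}}|$. Taking $\max\le(\sum|\cdot|^2)^{1/2}$ together with Cauchy--Schwarz over the $r+1$ levels gives, per variable, a factor $(1+r)^{1/2}$ times a square function of block projections. Those projections commute with $\sup_{n_1}$, and Carleson--Hunt (or again chaining) is applied in $x_1$ to each block. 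Orthogonality of disjoint blocks then reassembles $\|f\|_2$. Doing the chaining in all $d$ variables, without invoking Carleson, gives $(1+r)^{d}$ from Cauchy--Schwarz; to get the claimed $(1+r)^{d/2}$ one must use Carleson--Hunt (the vector-valued $\ell^2$ form, valid for $L^2$) in each variable in place of chaining. I expect this step to be the delicate one; the tensorization via Fubini is straightforward once the vector-valued Carleson estimate $\|\sup_n|S_n g_m|\|_{L^2(\ell^2_m)}\le C\|g\|_{L^2(\ell^2)}$ is available. That estimate follows trivially for $\ell^2$ by Fubini and summation.

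\emph{$1<p<2$ and polynomials.} $\mathcal M_r$ is a fixed sublinear operator, bounded $L^1\to L^{1}$ (hence weak $(1,1)$) with norm $C(1+r)^d$ and $L^2\to L^2$ with norm $C(1+r)^{d/2}$. The Marcinkiewicz interpolation theorem gives norm at most $C_p\,[(1+r)^d]^{1-\theta}[(1+r)^{d/2}]^{\theta}$ with $1/p=1-\theta+\theta/2$, i.e.\ $(1+r)^{d(1-\theta/2)}=(1+r)^{d/p}$, proving \eqref{eq:Mr-Lp}. Finally, if $\operatorname{supp}\widehat P\subset[-2^r,2^r]^d$, then $S_{\mathbf n}P=S_{\mathbf n'}P$ with $n'_j=\min(n_j,2^r)$, so $\mathcal MP=\mathcal M_rP$, and \eqref{eq:poly-max-p} follows from (i)--(iii).
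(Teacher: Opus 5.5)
Your arguments for (i), for the interpolation step (iii), and for the reduction $\mathcal M P=\mathcal M_rP$ are correct and coincide with the paper's. The gap is in (ii), the only nontrivial estimate.

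\textbf{The Carleson--Hunt step cannot work.} The proposed step ``use Carleson--Hunt in each variable in place of chaining'' would leave no $r$-dependence at all. That would give $\|\sup_{\mathbf n}|S_{\mathbf n}f|\|_2\le C\|f\|_2$ with $C$ independent of $r$. By density, every $f\in L^2(\mathbb T^2)$ would then have a.e.\ Pringsheim convergence, contradicting Fefferman's divergence theorem cited in the introduction.

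The obstruction is that, once the other indices are present, you need $\sup_{n_1}$ of an $\ell^\infty$-valued family. Carleson--Hunt has no $\ell^\infty$-valued extension; such an extension, combined with Carleson--Hunt in $x_2$, would again contradict Fefferman. The trivial $\ell^2$-valued version is usable only after the inner suprema have been turned into square functions, and that conversion is exactly what costs.

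\textbf{The chaining cost is also miscounted.} The dyadic blocks $\Delta_{k,j}$ at different levels $k$ are nested, not disjoint. Hence $\sum_{k=0}^r\sum_j\|\Delta_{k,j}f\|_2^2$ can be as large as $(1+r)\|f\|_2^2$, not $\|f\|_2^2$. Together with the Cauchy--Schwarz factor, each chained variable costs $(1+r)$ in $L^2$ norm (the Rademacher--Menshov loss), not $(1+r)^{1/2}$. Done correctly, your scheme (chaining in $d-1$ variables, Carleson--Hunt in the last one) gives only $\|\mathcal M_r\|_{2\to2}\le C(1+r)^{d-1}$. This equals $(1+r)^{d/2}$ when $d=2$. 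For $d\ge3$ it is too weak, and the main theorem needs exactly the exponent $d/p$.

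\textbf{The missing idea} is to avoid iterating one-variable maximal inequalities altogether. The paper imports the product-logarithmic inequality of Chen (Sj\"olin for $d=2$),
$\|\sup_{\mathbf n}|S_{\mathbf n}Q|\|_2^2\le C_d\sum_{\mathbf k}|\widehat Q(\mathbf k)|^2\prod_j\log(2+|k_j|)$,
and restricts it to spectrum in $[-2^r,2^r]^d$.

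You can also prove (ii) directly by a $d$-parameter Kolmogorov--Seliverstov--Plessner argument:
\begin{enumerate}[label=\textup{(\alph*)}]
\item Fix a measurable choice $\mathbf n(x)\in\{0,\dots,2^r\}^d$ and set $Tf(x)=S_{\mathbf n(x)}f(x)$.
\item Since $D_m*D_{m'}=D_{\min(m,m')}$, the operator $TT^*$ has kernel $\prod_{j=1}^d D_{\min(n_j(x),n_j(y))}(x_j-y_j)$.
\item This kernel is dominated by $\prod_j\min\bigl(2^{r+1}+1,\pi/|x_j-y_j|_{\mathbb T}\bigr)$.
\item Schur's test then gives $\|TT^*\|_{2\to2}\le (C(1+r))^d$, hence $\|\mathcal M_rf\|_2\le C_d(1+r)^{d/2}\|f\|_2$.
\end{enumerate}
The point is that the $TT^*$ kernel factorizes over the variables, so the one-dimensional $(\log N)^{1/2}$ gain tensorizes; no variable-by-variable iteration achieves this. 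With (ii) established this way, the rest of your proposal goes through unchanged.
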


\begin{proof}
We first prove the $L^1$ estimate. Put 
\begin{equation*}
D_r^*(t)=\sup_{0\le n\le2^r}|D_n(t)|.
\end{equation*}
The standard Dirichlet-kernel bound 
\begin{equation*}
|D_n(t)|\le C\min\{n+1,|t|_\mathbb{T}^{-1}\}
\end{equation*}
implies 
\begin{equation}
\left\|D_r^*\right\|_{L^1(\mathbb{T})}\le C(1+r).  \label{eq:Dstar-L1}
\end{equation}
For $0\le n_j\le2^r$, 
\begin{equation*}
|S_{\mathbf{n}}f|\le |f|*(D_r^*\otimes\cdots\otimes D_r^*).
\end{equation*}
Taking the supremum over such $\mathbf{n}$ and applying Young's inequality
gives \eqref{eq:Mr-L1}.

For $L^2$, let $Q_r f=S_{(2^r,\ldots,2^r)}f$. Then $S_{\mathbf{n}}f=S_{%
\mathbf{n}}Q_r f$ whenever $0\le n_j\le2^r$, and hence 
\begin{equation*}
\mathcal{M}_r f\le \sup_{\mathbf{n}\in\mathbb{N}_0^d}|S_{\mathbf{n}}Q_r f|.
\end{equation*}
We use the following product-logarithmic maximal inequality for rectangular
partial sums. In the one-sided form it is the maximal inequality appearing
in the proof of the Main Theorem of Chen \cite[Main Theorem, p.~549,
inequality (*)]{Chen1983}. In dimension $2$, a sharper predecessor was
proved by Sj\"olin \cite[Theorem 7.2, pp.~85--86]{Sjolin1971}; see also the
partial-summation argument in \cite[p.~4]{Sjolin1976}. The symmetric version
follows from the one-sided estimate by decomposing the frequency space into
the $2^d$ orthants. For every trigonometric polynomial $Q$ on $\mathbb T^d$,
these results give
\begin{equation}
\left\|\sup_{\mathbf{n}\in\mathbb{N}_0^d}|S_{\mathbf{n}}Q|\right\|_2^2 \le
C_d\sum_{\mathbf{k}\in\mathbb{Z}^d}|\widehat Q(\mathbf{k})|^2
\prod_{j=1}^d\log(2+|k_j|).  \label{eq:sjolin}
\end{equation}

%For completeness we indicate only the harmless reduction from the one-sided
%form to the symmetric sums used here. Decompose $\mathbb Z^d$ into the
%$2^d$ orthants
%\begin{equation*}
%E_\varepsilon=\{\mathbf k\in\mathbb Z^d:\, k_j\ge0\text{ if }\varepsilon_j=1,
%\ k_j<0\text{ if }\varepsilon_j=-1\},\qquad
%\varepsilon\in\{-1,1\}^d,
%\end{equation*}
%and write $Q=\sum_\varepsilon Q_\varepsilon$ according to this partition of
%its Fourier spectrum. After the reflection $y_j=\varepsilon_jx_j$, the
%symmetric partial sums of $Q_\varepsilon$ become one-sided rectangular
%partial sums of the polynomial with Fourier coefficients
%$\widehat Q(\mathbf k)$ indexed by $(|k_1|,\ldots,|k_d|)$. The product weight
%is unchanged: after reflection the one-sided index is $m_j=|k_j|$, and hence
%$\log(2+m_j)=\log(2+|k_j|)$. Applying Chen's one-sided estimate to each
%orthant and then Cauchy's inequality over the finite sum of $2^d$ orthants
%gives \eqref{eq:sjolin}.

Since $\operatorname{supp}\widehat{Q_r f}\subset[-2^r,2^r]^d$, the product of
logarithms in \eqref{eq:sjolin} is bounded by $C_d(1+r)^d$ on the support.
Parseval's identity therefore gives 
\begin{equation*}
\left\|\mathcal{M}_r f\right\|_{2}^2\le C_d(1+r)^d\left\|Q_r f\right\|_{2}^2
\le C_d(1+r)^d\left\|f\right\|_{2}^2,
\end{equation*}
which proves \eqref{eq:Mr-L2}.

For $1<p<2$, apply the Marcinkiewicz interpolation theorem to the fixed
sublinear operator $\mathcal{M}_r$. If $\theta\in(0,1)$ is determined by 
\begin{equation*}
\frac1p=(1-\theta)\cdot1+\theta\cdot\frac12,
\end{equation*}
then the interpolated power of $(1+r)$ is 
\begin{equation*}
d(1-\theta)+\frac d2\theta=\frac dp.
\end{equation*}
This proves \eqref{eq:Mr-Lp}. We cite \cite{SteinWeiss1971} for this
standard interpolation theorem for sublinear operators.

Finally suppose $P$ has spectrum in $[-2^r,2^r]^d$. If some $n_j>2^r$,
replacing $n_j$ by $2^r$ does not change the action of the $j$-th
one-dimensional frequency projection on $P$. Hence $\mathcal{M}P=\mathcal{M}%
_r P$. Estimate \eqref{eq:poly-max-p} follows from \eqref{eq:Mr-L1} when $%
p=1 $, from \eqref{eq:Mr-L2} when $p=2$, and from \eqref{eq:Mr-Lp} when $%
1<p<2$.
\end{proof}

\section{Proof of the main theorem and its consequences}

\begin{proof}[Proof of Theorem \protect\ref{thm:main}]
Let 
\begin{equation*}
r_k:=2^k, \qquad N_k:=2^{r_k-1}, \qquad k=1,2,\ldots,
\end{equation*}
Set 
\begin{equation*}
P_k=\mathcal{V}_{N_k}f, \qquad P_0=\mathcal{V}_1f.
\end{equation*}
By Lemma \ref{lem:vp-properties}, 
\begin{equation}
\left\|f-P_k\right\|_{p}\le C_{p,d}\omega(1/N_k,f)_p.  \label{eq:Pk-general}
\end{equation}
Since 
\begin{equation*}
\log N_k\asymp r_k, \qquad \Lambda_2(1/N_k)=\log\log(e^eN_k)\asymp
\log(e+r_k)\asymp k,
\end{equation*}
there exists a constant $c=c(d,p)>0$ such that, for all sufficiently large $%
k $, 
\begin{equation}
\left\|f-P_k\right\|_{p}\le C_{p,d}A(f)\,r_k^{-d/p}\,\psi(c k).
\label{eq:Pk-rate}
\end{equation}
 In particular, $%
P_k\to f$ in $L^p(\mathbb{T}^d)$.

Define the frequency blocks 
\begin{equation*}
g_0=P_0, \qquad g_k=P_k-P_{k-1},\quad k\ge1.
\end{equation*}
Then 
\begin{equation}
f=\sum_{k=0}^\infty g_k \qquad \text{in }L^p(\mathbb{T}^d).
\label{eq:Lp-decomposition}
\end{equation}
Moreover, by Lemma \ref{lem:vp-properties}, $g_k$ is a trigonometric
polynomial with spectrum contained in $[-2^{r_k},2^{r_k}]^d$.

Put 
\begin{equation*}
M_k(x)=\mathcal{M}g_k(x)=\sup_{\mathbf{n}\in\mathbb{N}_0^d}|S_{\mathbf{n}%
}g_k(x)|.
\end{equation*}
For every $1\le p\le2$, Proposition \ref{prop:finite-max} gives 
\begin{equation}
\left\|M_k\right\|_{p}\le C_{p,d}(1+r_k)^{d/p}\left\|g_k\right\|_{p}.
\label{eq:Mk-basic}
\end{equation}
On the other hand, \eqref{eq:Pk-rate} yields, for $k\ge2$, 
\begin{align}
\left\|g_k\right\|_{p} &\le
\left\|f-P_k\right\|_{p}+\left\|f-P_{k-1}\right\|_{p}  \notag \\
&\le C_{p,d}A(f)\,r_{k-1}^{-d/p}\,\psi(c(k-1)).  \label{eq:gk-rate}
\end{align}
Combining \eqref{eq:Mk-basic} and \eqref{eq:gk-rate}, and using $%
r_k=2r_{k-1} $, we obtain 
\begin{equation}
\left\|M_k\right\|_{p}\le C_{p,d}A(f)\,\psi(c(k-1)), \qquad k\ge2.
\label{eq:Mk-final}
\end{equation}
Because $\psi$ is non-increasing and \eqref{eq:psi-summable} holds, 
\begin{equation}
\sum_{k=2}^{\infty}\psi(c(k-1))<\infty.  \label{eq:psi-scaled-summable}
\end{equation}
For example, each interval $[m/c,(m+1)/c)$ contains at most a bounded number
of integers. Hence 
\begin{equation}
\sum_{k=1}^\infty \left\|M_k\right\|_{p}<\infty.  \label{eq:sum-Mk}
\end{equation}
Since the Haar measure of $\mathbb{T}^d$ is normalized and $p\ge1$, this
implies 
\begin{equation*}
\sum_{k=1}^\infty \left\|M_k\right\|_{1}<\infty.
\end{equation*}
By monotone convergence, 
\begin{equation}
\sum_{k=1}^\infty M_k(x)<\infty  \label{eq:pointwise-Mk-sum}
\end{equation}
for almost every $x\in\mathbb{T}^d$. Let $E_0$ be the set of points for
which \eqref{eq:pointwise-Mk-sum} holds.

For $x\in E_0$, the numerical series $\sum_{k\ge1}g_k(x)$ converges
absolutely, because $|g_k(x)|\le M_k(x)$. Define 
\begin{equation*}
F(x)=\sum_{k=0}^\infty g_k(x),\qquad x\in E_0.
\end{equation*}
Since $P_K=\sum_{k=0}^K g_k$ and $P_K\to f$ in $L^p$, we have $P_K\to f$ in
measure. On the other hand, $P_K(x)\to F(x)$ for every $x\in E_0$, and hence 
$P_K\to F$ in measure on $E_0$. By uniqueness of limits in measure, $F=f$
almost everywhere on $E_0$. Therefore there is a set $E\subset E_0$ of full
measure such that 
\begin{equation}
f(x)=\sum_{k=0}^\infty g_k(x),\qquad x\in E.  \label{eq:pointwise-series-f}
\end{equation}

Fix $x\in E$. For $K\ge1$, put 
\begin{equation*}
R_K=f-P_K.
\end{equation*}
By \eqref{eq:pointwise-series-f}, 
\begin{equation}
|R_K(x)|\le\sum_{k=K+1}^\infty M_k(x).  \label{eq:RK-tail}
\end{equation}
The polynomial $P_K$ has spectrum contained in $[-2^{r_K},2^{r_K}]^d$, and
therefore 
\begin{equation}
S_{\mathbf{n}}P_K=P_K \qquad \text{whenever } n_j\ge2^{r_K}\text{ for all }j.
\label{eq:PK-stable}
\end{equation}
For such $\mathbf{n}$, 
\begin{equation}
S_{\mathbf{n}}f(x)-f(x)=S_{\mathbf{n}}R_K(x)-R_K(x).
\label{eq:error-reduction}
\end{equation}

We estimate $S_{\mathbf{n}}R_K(x)$. The multiplier identity %
\eqref{eq:vp-fourier} shows that the Fourier coefficients of $%
g_k=P_k-P_{k-1} $ vanish on the cube $[-N_{k-1},N_{k-1}]^d$. For fixed $%
\mathbf{n}$, this implies $S_{\mathbf{n}}g_k=0$ for all sufficiently large $k
$. Since the tail $\sum_{k>K}g_k$ converges to $R_K$ in $L^p$ and the
operator $S_{\mathbf{n}}$ is bounded on $L^p$ for fixed $\mathbf{n}$, we get 
\begin{equation*}
S_{\mathbf{n}}R_K(x)=\sum_{k=K+1}^\infty S_{\mathbf{n}}g_k(x),
\end{equation*}
where only finitely many terms are nonzero. Hence 
\begin{equation}
|S_{\mathbf{n}}R_K(x)|\le\sum_{k=K+1}^\infty M_k(x).  \label{eq:SnRK-tail}
\end{equation}
Combining \eqref{eq:RK-tail}, \eqref{eq:error-reduction}, and %
\eqref{eq:SnRK-tail}, we obtain 
\begin{equation*}
|S_{\mathbf{n}}f(x)-f(x)|\le2\sum_{k=K+1}^\infty M_k(x)
\end{equation*}
whenever $n_j\ge2^{r_K}$ for every $j$. Taking the limit superior as $%
\min_jn_j\to\infty$ gives 
\begin{equation*}
\limsup_{\min_jn_j\to\infty}|S_{\mathbf{n}}f(x)-f(x)|
\le2\sum_{k=K+1}^\infty M_k(x).
\end{equation*}
Finally let $K\to\infty$. The right-hand side tends to zero by %
\eqref{eq:pointwise-Mk-sum}. Thus $S_{\mathbf{n}}f(x)\to f(x)$ for every $%
x\in E$, and $E$ has full measure.
\end{proof}

\begin{proof}[Proof of Corollary \protect\ref{cor:double-log}]
Apply Theorem \ref{thm:main} with $\psi(t)=t^{-1-\eta}$. The series $%
\sum_{k=1}^\infty k^{-1-\eta}$ converges, so the corollary follows.
\end{proof}

\begin{proof}[Proof of Corollary \protect\ref{cor:L2-oskolkov}]
Apply Theorem \ref{thm:main} with the given $p$ and
\begin{equation*}
\psi(t)=\frac{1}{t(\log(e+t))^{1+\eta}},\qquad t\ge1.
\end{equation*}
This function is non-increasing and $\sum_{k=2}^\infty 1/(k(\log
k)^{1+\eta})<\infty$. Moreover, 
\begin{equation*}
\log(e+\Lambda_2(\delta))\asymp \Lambda_3(\delta), \qquad 0<\delta\le\frac12.
\end{equation*}
Thus the hypothesis \eqref{eq:L2-oskolkov-condition} implies %
\eqref{eq:main-condition} with this choice of $\psi$, after changing the
constant $A(f)$. The conclusion follows from Theorem \ref{thm:main}.
\end{proof}

\section{Open question}
\label{sec:open-question}

We close with the natural Oskolkov--Zhizhiashvili endpoint problem suggested
by Theorem \ref{thm:main}. The exponent $d/p$ below is the same
dimensionally critical exponent as in \eqref{eq:main-condition}.

\begin{problem}[Oskolkov--Zhizhiashvili $L^p$ endpoint problem]
	\label{prob:oskolkov-lp}
	Let $d\ge2$, $1\le p\le2$, and let $f\in L^p(\mathbb{T}^d)$. Does the
	little-$o$ condition
	\begin{equation}
		\omega(\delta,f)_p=o\left(
		\frac{1}{(\log(1/\delta))^{d/p}
			\log\log\log(e^{e^e}/\delta)}
		\right),\qquad \delta\to0+,
		\label{eq:oskolkov-lp-little-o}
	\end{equation}
	imply almost everywhere Pringsheim convergence of the rectangular Fourier
	sums of $f$?
\end{problem}

Theorem \ref{thm:main} proves convergence under stronger summable secondary
assumptions. For instance, choosing
$\psi(t)=1/(t(\log(e+t))^{1+\eta})$ in \eqref{eq:main-condition} gives the
sufficient condition
\begin{equation*}
	\omega(\delta,f)_p=O\left(
	\frac{1}{\Lambda_1(\delta)^{d/p}\Lambda_2(\delta)
		\Lambda_3(\delta)^{1+\eta}}
	\right),\qquad \eta>0.
\end{equation*}
Problem \ref{prob:oskolkov-lp} asks whether these summable secondary factors
can be replaced by the weaker Oskolkov-type triple logarithm
$\Lambda_3(\delta)$, while preserving the dimensionally correct critical
power $\Lambda_1(\delta)^{-d/p}$. Thus it lies between the summable criteria
proved here and the bare critical endpoint question with no secondary
logarithmic factor. Such a result would be new in the endpoint cases $p=1$
and $p=2$; for $d>2$ it would give a higher-dimensional $L^p$ analogue of
the two-dimensional Oskolkov phenomenon. The present
approximation--maximal method does not settle
\eqref{eq:oskolkov-lp-little-o}: with the logarithmic blocks used in the
proof, \eqref{eq:oskolkov-lp-little-o} gives only block bounds of order
$o(1/\log k)$, which need not be summable.

\section*{Conflicts of Interest}

The authors declare that they have no conflicts of interest.

\section*{Data Availability}

Not applicable.

\end{document}